\theoremstyle{plain}
 \newtheorem{thm}{Theorem}[section]
 \newtheorem{prop}{Proposition}[section]
\theoremstyle{definition}
 \newtheorem{rem}{Remark}[section]
\numberwithin{equation}{section}
\renewcommand{\le}{\leqslant}\renewcommand{\leq}{\leqslant}
\renewcommand{\ge}{\geqslant}
\title[Running title]{An application of a moment problem to completely monotonic functions}
\subjclass[2010]{Primary: 26A48. Secondary: 30E20}
\keywords{Complete monotonicity, digamma function, moment problem}
\author[Jovanovi\'c]{\bfseries Vladimir Jovanovi\'c $^1$}
\author[Treml]{Milanka Treml $^1$}
\address{$^1$
Faculty of Sciences and Mathematics \\ 
University of Banja Luka  \\ 
Banja Luka\\
Republic of Srpska, Bosnia and Herzegovina}
\email{vladimir.jovanovic@pmf.unibl.org}
\email{milanka.treml@pmf.unibl.org}
\begin{document}

\begin{abstract}
We consider the following question: if a function of the form $\int_0^{\infty}\varphi(t)\, e^{-xt}dt$ is completely monotonic, is it then $\varphi\ge0$? It turns out that the question is related to a moment problem. In the end we apply those results to answer some questions concerning complete monotonicity of certain functions raised in F.~Qi and R.~Agarwal, \textit{On complete monotonicity for several classes of   functions related to ratios of gamma functions}, J Inequal Appl (2019).
\end{abstract}

\maketitle

\section{Introduction}  
At the beginning we review basic notions and facts related to \textit{completely monotonic functions}. An infinitely differentiable function $f:(0,\infty)\rightarrow\mathbb{R}$ is called \textit{completely monotonic}, if
$$(-1)^nf^{(n)}\ge0,\quad n=0,1,2,\dots.$$
The crucial fact concerning this class of functions is \textit{Bernstein theorem}: a function $f$ is completely monotonic is and only if there exists a positive Borel measure $\mu$ on $[0,\infty)$, such that
\begin{equation}\label{eq:Bern}
f(x)=\int_{[0,\infty)}e^{-xt}d\mu(t),
\end{equation}
for all $x>0$. Furthermore, the measure $\mu$ is uniquely determined (see \cite{BergForst}, p.\, 61). In many applications one comes up to the situation that a function of the form $\int_0^{\infty}\varphi(t)\, e^{-xt}dt$ is completely monotonic. Usually, in view of Bernstein theorem, it is tacitly assumed that the function $\varphi$ is then necessarily non-negative. Our aim here is to clarify this question: we give a sufficient condition on $\varphi$ which guarantees the claim and provide a complete proof. It turns out that our question has to do with  uniqueness of measures in a moment problem which we consider in the next section. In the sequel we apply those results in order to answer the question (in a slightly more general form) raised in \cite{QiAgar} on page 34 whether the functions $\psi'(x+1)-\sinh\frac1{x+1}$ and $\frac12\sinh\frac2x - \psi'(x+1)$ are completely monotonic, where $\psi$ is digamma function.
\section{A moment problem}
As we previously mentioned, our considerations are tightly related to the uniqueness question for measures in a moment problem, which we state in Theorem \ref{moment} (see below). It resembles the \textit{Stieltjes moment problem}: if two non-negative measures $\mu$ and $\nu$ with support on $[0,\infty)$ have the same moments, that is, if
$\int_0^{\infty}t^n\, d\mu(t)=\int_0^{\infty}t^n\, d\nu(t)$, for all $n=0,1,\dots$, is it then $\mu=\nu$? In our case, we use a substitution and reduce it to the \textit{Hausdorff moment problem}, where the support of measures is $[0,1]$.
Now, we turn to our moment problem.
\begin{thm}\label{moment}
Assume $\mu$ and $\nu$ are complex Borel measures on $[0,\infty)$ with the property
\[
\int_{[0,\infty)}e^{-nt}\, d\mu(t)=\int_{[0,\infty)}e^{-nt}\, d\mu(t), \enspace n=0,1,2\dots.
\]
Then, $\mu=\nu$.
\end{thm}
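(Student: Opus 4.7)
The plan is to reduce this statement, via a substitution, to the \emph{Hausdorff moment problem} on the compact interval $[0,1]$, where determinacy holds even for signed/complex measures by an elementary density argument.

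First, I would make the substitution $s = e^{-t}$, which is a homeomorphism from $[0,\infty)$ onto $(0,1]$. Push forward $\mu$ and $\nu$ under this homeomorphism to obtain complex Borel measures $\tilde\mu$ and $\tilde\nu$ on $(0,1]$, which I extend to complex Borel measures on $[0,1]$ by setting $\tilde\mu(\{0\})=\tilde\nu(\{0\})=0$. Under this change of variables, $\int_{[0,\infty)}e^{-nt}\,d\mu(t)$ becomes $\int_{[0,1]}s^n\,d\tilde\mu(s)$, so the hypothesis translates to
\[
\int_{[0,1]}s^n\,d\tilde\mu(s)=\int_{[0,1]}s^n\,d\tilde\nu(s),\qquad n=0,1,2,\dots.
\]

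By linearity, the two measures give the same integral against every polynomial on $[0,1]$. By the Weierstrass approximation theorem, the polynomials form a dense subspace of $C([0,1])$ in the supremum norm. Since $\tilde\mu$ and $\tilde\nu$ are complex Borel measures on the compact space $[0,1]$, they have finite total variation and hence define bounded linear functionals on $C([0,1])$ via integration. Two bounded linear functionals that coincide on a dense subspace must coincide on all of $C([0,1])$, so by the Riesz representation theorem $\tilde\mu=\tilde\nu$.

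Finally, pulling back by the homeomorphism $t\mapsto e^{-t}$ (under which Borel sets correspond), this equality gives $\mu=\nu$ on $[0,\infty)$. The main subtlety — which I expect to be the only delicate point — is verifying that the pushed-forward measures can be treated as living on the compact interval $[0,1]$ (i.e., that there is no ``mass at infinity'' to worry about, which is automatic because $\mu,\nu$ are by assumption measures on $[0,\infty)$ with finite total variation, so the push-forwards extend trivially to $[0,1]$ with no atom at $s=0$). Once that is in place, the Weierstrass/Riesz duality argument is immediate.
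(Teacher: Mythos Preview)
Your proposal is correct and follows essentially the same route as the paper: push forward via $s=e^{-t}$ to reduce to a Hausdorff-type moment problem on a compact interval, then use density of polynomials in $C[0,1]$ to conclude the pushed-forward measures agree. The only difference is that where you invoke the Riesz representation theorem to pass from equality of integrals on $C[0,1]$ to equality of the measures, the paper carries out this step by hand, approximating indicator functions $\chi_{[a,b]}$ by piecewise linear continuous functions $I_\delta$ and applying dominated convergence.
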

We need the following change of variables formula.
\begin{prop}\label{change}
Let $(X,\mathcal{M},\mu)$ be a measure space, $(Y,\mathcal{N})$ a measurable space and $F:X\to Y$ a measurable map. Then for every measurable function $f:Y\to \mathbb{C}$ and every $E\in\mathcal{N}$ we have
\[
\displaystyle \int_E f(y)\, dF_*\mu(y)=\int_{F^{-1}(E)} f(F(x))\, d\mu(x),
\]
in the case either of two sides is defined. Here $F_*\mu=\mu\circ F^{-1}$ is a measure on  $(Y,\mathcal{N})$, the so-called push-forward of $\mu$. 
\end{prop}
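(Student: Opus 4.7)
The plan is to prove the identity by the standard measure-theoretic bootstrap, climbing through indicator functions, non-negative simple functions, non-negative measurable functions, and finally complex-valued functions. No deep input is needed; everything is a routine consequence of the definition $F_*\mu(A)=\mu(F^{-1}(A))$ combined with monotone convergence.

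The base case is $f=\chi_A$ for $A\in\mathcal{N}$. By definition of the push-forward, the left-hand side equals $F_*\mu(A\cap E)=\mu(F^{-1}(A)\cap F^{-1}(E))$, while on the right, using the elementary identity $\chi_A\circ F=\chi_{F^{-1}(A)}$, the integral collapses to the same quantity. Linearity then extends the formula to non-negative simple functions on $Y$.

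For a non-negative measurable $f:Y\to[0,\infty]$, I would pick an increasing sequence $s_n\nearrow f$ of simple functions. The previous step applies to each $s_n$, and since $s_n\circ F\nearrow f\circ F$ pointwise on $X$, the monotone convergence theorem applied on both sides yields the identity for $f$. Specialising this to $f=|g|$ for an arbitrary measurable $g$ shows that $\int_E |g|\, dF_*\mu$ and $\int_{F^{-1}(E)} |g\circ F|\, d\mu$ are either both finite or both infinite.

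Finally, a complex-valued measurable $f$ decomposes as $f=(u^+-u^-)+i(v^+-v^-)$ into four non-negative measurable pieces; applying the previous step to each piece and combining by linearity gives the formula, provided all four pieces are integrable. The hypothesis that one of the two sides is defined amounts to $\int |f|$ being finite against the relevant measure, and by the preceding paragraph the two integrability conditions are equivalent. The only point deserving attention is exactly this symmetry of integrability, which is why I would be explicit about invoking monotone convergence in the non-negative step and applying it to $|f|$ before tackling the complex case.
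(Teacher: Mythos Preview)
Your argument is correct and is precisely the standard ``simple functions $\to$ non-negative functions via monotone convergence $\to$ complex functions by decomposition'' bootstrap. The paper does not actually supply its own proof of this proposition; it simply refers the reader to Durrett's textbook, where the same argument you outline appears. So there is nothing to compare: your write-up is the expected proof.
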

\noindent For the proof, see \cite[p.\, 30-31]{Durrett}.
\noindent\begin{rem}\label{Remark}
We notice that the change of variable formula also holds for complex Borel measures.
\end{rem}
\textit{Proof of Theorem \ref{moment}}.\\[1ex]
Recall that the complex measures $\mu$ and $\nu$ are of bounded variation, $M_{\mu}:=|\mu|([0,\infty))<\infty$ and $ M_{\nu}:=|\nu|([0,\infty))<\infty$ (see \cite{Rudin}). Let us define a homeomorphism $F:[0,\infty)\rightarrow (0,1]$, $F(t)=e^{-t}$. Applying Proposition \ref{change} (more precisely Remark \ref{Remark}), we obtain
\[
\displaystyle \int_{[0,\infty)}e^{-nt}d\mu(t)=\int_{(0,1]}s^n\, dF_*\mu(s),\quad \int_{[0,\infty)}e^{-nt}d\nu(t)=\int_{(0,1]}s^n\, dF_*\nu(s).
\]
From the assumptions of Theorem \ref{moment}, we have
\[
\int_{(0,1]}s^n\, dF_*\mu(s)=\int_{(0,1]}s^n\, dF_*\nu(s),
\]
for all $n=0,1,2\dots$. Hence
\begin{equation}
\int_{(0,1]}P(s) \, dF_*\mu(s)=\int_{(0,1]}P(s) \, dF_*\nu(s),
\end{equation}
for all polynomials $P$. Notice
\[
|F_*\mu|((0,1])=F_*|\mu|((0,1])=|\mu|([0,\infty))=M_{\mu}
\]
and similarly $|F_*\nu|((0,1])=M_{\nu}$. Therefore, each bounded and measurable (in Borel sense) function on $(0,1]$ is integrable with respect to the both measures $F_*\mu$ and $F_*\nu$. In view of
\[
\left|\int_{(0,1]}g(s)\, dF_*\mu(s)\right|\le M_{\mu}\, \|g\|_{\infty},\quad \left|\int_{(0,1]}g(s)\, dF_*\nu(s)\right|\le M_{\nu}\, \|g\|_{\infty},
\]
for all bounded measurable functions $g:(0,1]\rightarrow\mathbb{R}$, where $\|g\|_{\infty}=\sup\{|g(x)|\, :\, x\in (0,1]\}$, we conclude from Stone - Weierstrass theorem that
\begin{equation}\label{eq:SW}
\int_{(0,1]}g(s) \, dF_*\mu(s)=\int_{(0,1]}g(s) \, dF_*\nu(s),
\end{equation}
for all $g\in C[0,1]$. For small $\delta>0$ introduce a continuous, piecewise linear function $I_{\delta}:(0,1]\to\mathbb{R}$,
\[
I_{\delta}(t)=\left\{
\begin{array}{cc}
0,&t<a-\delta\\
\frac{t-(a-\delta)}\delta,&a-\delta\leq t\leq a\\
1,&a\leq t\leq b\\
\frac{b+\delta-t}\delta,&b\leq t\leq b+\delta\\
0,&b+\delta\leq t,
\end{array}
\right.
\]
where $[a,b]\subset (0,1]$. From \eqref{eq:SW}, we have
\begin{equation}\label{eq:delta}
\int_{(0,1]}I_{\delta}(s) \, dF_*\mu(s)=\int_{(0,1]}I_{\delta}(s) \, dF_*\nu(s).
\end{equation}
Taking into account that $I_{\delta}\to\chi_{[a,b]}$ pointwise as $\delta\to 0+$ (here $\chi$ denotes characteristic function) and $0\le I_{\delta}\le1$, one infers, applying Lebesgue dominant convergence theorem to integrals in \eqref{eq:delta}, that
$\int_{(0,1]}\chi_{[a,b]}(s) \, dF_*\mu(s)=\int_{(0,1]}\chi_{[a,b]}(s) \, dF_*\nu(s)$, or equivalently $F_*\mu([a,b])=F_*\nu([a,b])$, for all $[a,b]\subset (0,1]$. Following a similar procedure one can also deduce $F_*\mu((0,b])=F_*\nu((0,b])$, for all $(0,b]\subset(0,1]$. Therefore $F_*\mu(E)=F_*\nu(E)$ for all Borel sets $E\subset (0,1]$, which implies $F_*\mu=F_*\nu$. Finally, we obtain $\mu=\nu$, since $F$ is a homeomorphism.\ $\Box$
\begin{prop}\label{main}
Let $\varphi:[0,\infty)\rightarrow\mathbb{R}$ be a continuous function with the property
\begin{equation}\label{eq:cond}
\int_0^{\infty}|\varphi(t)|\, dt<\infty.
\end{equation}
If $f(x)=\int_0^{\infty}\varphi(t)\, e^{-xt}\, dt$ is completely monotonic, then $\varphi\ge0$.
\end{prop}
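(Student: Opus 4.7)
The plan is to use Bernstein's theorem to produce a second integral representation of $f$ and then invoke Theorem \ref{moment} to identify the two representing measures. By Bernstein's theorem there is a unique positive Borel measure $\mu$ on $[0,\infty)$ with $f(x)=\int_{[0,\infty)}e^{-xt}\,d\mu(t)$ for all $x>0$. On the other side, I would define a real signed Borel measure $\nu$ on $[0,\infty)$ by $d\nu(t)=\varphi(t)\,dt$; the integrability hypothesis \eqref{eq:cond} gives $|\nu|([0,\infty))=\int_0^\infty|\varphi(t)|\,dt<\infty$, so $\nu$ is a complex Borel measure in the sense required by Theorem \ref{moment}.

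Before invoking Theorem \ref{moment} I have to verify that $\mu$ is finite and that $\int_{[0,\infty)}e^{-nt}\,d\mu=\int_{[0,\infty)}e^{-nt}\,d\nu$ for \emph{every} $n=0,1,2,\dots$, and not only for $n\ge 1$. For $n\ge 1$ the identity is immediate since both sides equal $f(n)$. For $n=0$ I would let $x\to 0^+$: dominated convergence with dominating function $|\varphi|$ yields $\lim_{x\to 0^+}f(x)=\int_0^\infty\varphi(t)\,dt$, while monotone convergence applied to $e^{-xt}\nearrow 1$ gives $\lim_{x\to 0^+}\int_{[0,\infty)}e^{-xt}\,d\mu(t)=\mu([0,\infty))$. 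Equating the two limits simultaneously establishes finiteness of $\mu$ and the missing $n=0$ moment identity.

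With both measures of bounded variation and all moments agreeing, Theorem \ref{moment} forces $\mu=\nu$. In particular the signed measure $\nu$ coincides with the positive measure $\mu$, so $\nu$ is non-negative on Borel sets. To conclude $\varphi\ge 0$ pointwise, I would argue by contradiction: if $\varphi(t_0)<0$ for some $t_0\ge 0$, continuity of $\varphi$ supplies an open subinterval $I\subset[0,\infty)$ containing $t_0$ on which $\varphi<0$, whence $\nu(I)=\int_I\varphi(t)\,dt<0$, contradicting $\nu=\mu\ge 0$.

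The main obstacle is the $n=0$ step together with the verification that $\mu$ is finite; Bernstein's theorem a priori allows $\mu$ to have infinite total mass (as with $f(x)=1/x$, where $\mu$ is Lebesgue measure on $[0,\infty)$), so this is not automatic, and it is the only place where the integrability assumption \eqref{eq:cond} actually enters. The remaining passage from $\mu=\nu$ to a pointwise lower bound on $\varphi$ is a routine continuity argument.
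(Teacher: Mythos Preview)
Your proof is correct and follows essentially the same route as the paper's: obtain $\mu$ from Bernstein's theorem, set $d\nu=\varphi\,dt$, check the moment equalities so that Theorem~\ref{moment} yields $\mu=\nu$, and then use continuity of $\varphi$ to pass from non-negativity of the measure to $\varphi\ge 0$ pointwise. Your handling of the $n=0$ step and of the finiteness of $\mu$ (via monotone convergence on the $\mu$-side and dominated convergence on the $\varphi$-side) is in fact more explicit than the paper's, which simply writes $\mu([0,\infty))=f(0)=\int_0^\infty\varphi(t)\,dt$ without spelling out the limiting argument.
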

\begin{proof}
Since $f$ is completely monotonic, then according to Bernstein theorem, there exists a non-negative Borel measure $\mu$ on $[0,\infty)$ satisfying \eqref{eq:Bern} for all $x>0$. Due to \eqref{eq:cond}, we have
\[
\mu([0,\infty))=\int_{[0,\infty)}d\mu=f(0)=\int_0^{\infty}\varphi(t)\, dt<\infty,
\]
and consequently, $\mu$ is a finite measure. Again, thanks to \eqref{eq:cond}, we conclude that $\nu(E)=\int_E\varphi(t)\, dt$  is a Borel measure of bounded variation $|\nu|([0,\infty))=\int_0^{\infty}|\varphi(t)|\, dt<\infty$. 
Taking into account that
\[
\int_{[0,\infty)}e^{-xt}\, d\nu(t)=\int_0^{\infty}\varphi(t)\, e^{-xt}\, dt=f(x)=\int_{[0,\infty)}e^{-xt}\, d\nu(t),
\]
for all $x\ge0$, we see that the assumptions of Theorem \ref{moment} are fulfilled. Therefore, $\mu=\nu$. This implies
\[
\int_a^b \varphi(t)\, dt=\nu([a,b])=\mu([a,b])\ge0,
\]
for all $[a,b]\subset[0,\infty)$. However, $\varphi$ is continuous, whence $\varphi\ge0$. 
\end{proof}
\section{Applications}
We apply the results from the previous section with the aim to answer two questions stated in \cite{QiAgar} on page 34, which concern complete monotonicity of functions  $\psi'(x+1)-\sinh\frac1{x+1}$ and $\frac12\sinh\frac2x - \psi'(x+1)$. We will actually prove slightly more general assertions. Here $\psi(x)=\Gamma'(x)/\Gamma(x)$ is the digamma function.
\begin{prop}
For all $m>0$ the function $f(x)=\psi'(x+1)-\frac{1}{m}\sinh\frac{m}{x+1}$ is not completely monotonic.
\end{prop}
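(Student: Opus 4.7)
The plan is to apply Proposition \ref{main} contrapositively. I will write $f(x) = \int_0^\infty \varphi(t)\, e^{-xt}\,dt$ with $\varphi$ continuous and $L^1$, and then exhibit large $t$ for which $\varphi(t) < 0$; this rules out complete monotonicity.

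For the integral representations I would invoke the classical $\psi'(x+1) = \int_0^\infty \frac{t}{e^t - 1}\, e^{-xt}\,dt$, and for the hyperbolic term expand
\[
\frac{1}{m}\sinh\frac{m}{x+1} = \sum_{k=0}^\infty \frac{m^{2k}}{(2k+1)!\,(x+1)^{2k+1}},
\]
substitute $\frac{1}{(x+1)^{2k+1}} = \frac{1}{(2k)!}\int_0^\infty t^{2k} e^{-(x+1)t}\,dt$, and swap sum and integral by Tonelli (all summands non-negative) to arrive at
\[
\frac{1}{m}\sinh\frac{m}{x+1} = \int_0^\infty e^{-t}\, g(t)\, e^{-xt}\,dt,\qquad g(t) := \sum_{k=0}^\infty \frac{(mt)^{2k}}{(2k+1)!\,(2k)!}.
\]
Subtracting, $\varphi(t) = \frac{t}{e^t-1} - e^{-t}g(t)$. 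Both summands extend continuously to $t=0$ (with value $1$), so $\varphi \in C[0,\infty)$; and $\int_0^\infty \frac{t}{e^t-1}\,dt = \pi^2/6$, while termwise integration gives $\int_0^\infty e^{-t}g(t)\,dt = \sum_{k\ge 0}\frac{m^{2k}}{(2k+1)!} = \frac{\sinh m}{m} < \infty$, so $\varphi \in L^1(0,\infty)$.

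The crux is the sign of $\varphi$ at infinity. Retaining only the $k=1$ term of $g$ gives $g(t) \ge \frac{m^2 t^2}{12}$, while for $t \ge 1$ one has $\frac{t}{e^t-1} = \frac{t e^{-t}}{1-e^{-t}} \le \frac{e}{e-1}\, t\, e^{-t}$. Consequently $\varphi(t) < 0$ whenever $\frac{m^2 t^2}{12} > \frac{e}{e-1}\, t$, which holds for all sufficiently large $t$ (depending on $m>0$). Proposition \ref{main} then prohibits $f$ from being completely monotonic. No deep obstacle remains; the only small item to check carefully is the interchange of summation and integration in the $\sinh$ representation, but this is immediate from Tonelli since every summand is non-negative.
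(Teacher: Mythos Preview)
Your proof is correct and follows essentially the same approach as the paper: write $f(x)=\int_0^\infty \varphi(t)e^{-xt}\,dt$ with $\varphi(t)=\frac{t}{e^t-1}-e^{-t}g(t)$ (equivalently, the paper's $\big(\tfrac{t}{1-e^{-t}}-g(t)\big)e^{-t}$), observe that the quadratic lower bound from the $k=1$ term of $g$ eventually dominates the linear growth of $\tfrac{t}{1-e^{-t}}$ so that $\varphi(t)<0$ for large $t$, and invoke Proposition~\ref{main}. You are in fact a bit more explicit than the paper in verifying $\varphi\in L^1$ and in justifying the sum--integral interchange, which the paper simply asserts.
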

\begin{proof}
We employ the following representations
\begin{equation}\label{eq:repr}
\displaystyle \frac1{x^n}=\frac1{(n-1)!}\int_0^{\infty}t^{n-1}\, e^{-xt}\, dt, \quad \psi'(x)=\int_0^{\infty}\frac{t}{1-e^{-t}}\, e^{-xt}\, dt,
\end{equation}
for all $x>0$ and $n\in\mathbb{N}$. The latter one is due to S.\, Ramanujan (see \cite[p.\ 374]{Berndt}). From
\[
\displaystyle \frac{1}{m}\sinh\frac{m}{x+1}=\sum_{n=0}^{\infty}\frac{m^{2n}}{(2n+1)!}\,\frac1{(x+1)^{2n+1}},
\]
we conclude that
\begin{align*}
&\psi'(x+1)-\frac{1}{m}\sinh\frac{m}{x+1}=\\
&=\int_0^{\infty}\left(\frac{t}{1-e^{-t}}-\sum_{n=0}^{\infty}\frac{m^{2n}{t^{2n}}}{(2n)!(2n+1)!}\right)e^{-(x+1)t}\, dt \\
&= \int_0^{\infty}\varphi(t)\, e^{-xt}\, dt,&
\end{align*}
where $\varphi(t)=\left(\frac{t}{1-e^{-t}}-\sum_{n=0}^{\infty}\frac{m^{2n}{t^{2n}}}{(2n)!(2n+1)!}\right)e^{-t}$. Owing to $\frac{t}{1-e^{-t}}\sim t$ as $t\to \infty$ and $\sum_{n=0}^{\infty}\frac{m^{2n}{t^{2n}}}{(2n)!(2n+1)!}\ge \frac{m^2t^2}{4!\, 5!}$, one obtains that $\varphi$ is negative for large $t$. It is easy to see that $\int_0^{\infty}|\varphi(t)|\, dt<\infty$ and using Proposition \ref{main}, we infer that $f$ is not completely monotonic.
\end{proof}
\begin{prop}
For all $m>0$ function the $f(x)=\frac{1}{m}\sinh \frac{m}{x}-\psi'(x+1)$ is completely monotonic.
\end{prop}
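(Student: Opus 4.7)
My plan parallels the proof of the previous proposition up to the integral representation, then exploits the reversed sign to conclude directly via Bernstein's theorem rather than via Proposition~\ref{main}. First, using \eqref{eq:repr} with $n$ replaced by $2n+1$ and the expansion $\frac{1}{m}\sinh\frac{m}{x}=\sum_{n=0}^{\infty}\frac{m^{2n}}{(2n+1)!}\cdot\frac{1}{x^{2n+1}}$, I would interchange sum and integral (justified by Tonelli, since all terms are non-negative) to obtain
\[
\frac{1}{m}\sinh\frac{m}{x}=\int_0^{\infty} A(t)\, e^{-xt}\, dt,\qquad A(t):=\sum_{n=0}^{\infty}\frac{m^{2n}t^{2n}}{(2n)!(2n+1)!}.
\]
From the Ramanujan formula in \eqref{eq:repr}, replacing $x$ by $x+1$ and absorbing $e^{-t}$ into the kernel gives $\psi'(x+1)=\int_0^{\infty}\frac{t}{e^t-1}\, e^{-xt}\, dt$. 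Hence
\[
f(x)=\int_0^{\infty}\varphi(t)\, e^{-xt}\, dt,\qquad \varphi(t)=A(t)-\frac{t}{e^t-1}.
\]

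The second step is to verify $\varphi\ge 0$ on $[0,\infty)$. Unlike the previous proposition, once this is done we do \emph{not} need Proposition~\ref{main}; instead, setting $d\mu=\varphi(t)\,dt$ produces a non-negative Borel measure on $[0,\infty)$ yielding the Bernstein representation \eqref{eq:Bern} of $f$, so $f$ is completely monotonic.

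The pointwise inequality $\varphi\ge 0$ is elementary: every term in the defining series for $A$ is non-negative and the $n=0$ term equals $1$, so $A(t)\ge 1$ for all $t\ge 0$; meanwhile the standard inequality $e^t\ge 1+t$ implies $\frac{t}{e^t-1}\le 1$ for $t>0$ (with limit $1$ as $t\to 0^+$). Therefore $\varphi(t)\ge 1-1=0$ throughout $[0,\infty)$. The slickness here is that the inequality holds uniformly in $m>0$ because it does not use $m$ at all beyond $m\ge 0$.

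I do not foresee a real obstacle. The only technical point worth checking carefully is the term-by-term Laplace transform of $\frac{1}{m}\sinh\frac{m}{x}$, which is a routine Tonelli argument on a non-negative series; the decisive inequality $A\ge 1\ge \frac{t}{e^t-1}$ is immediate from the structure of the series and a textbook bound on the exponential.
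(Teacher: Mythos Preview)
Your proof is correct and follows essentially the same route as the paper: both derive the representation $f(x)=\int_0^\infty\varphi(t)\,e^{-xt}\,dt$ with $\varphi(t)=A(t)-\dfrac{t}{e^t-1}$ and then establish $\varphi\ge 0$ using the same two ingredients, namely $A(t)\ge 1$ (from the $n=0$ term) and $e^t-1\ge t$. The only cosmetic difference is that the paper first multiplies by $e^t-1$ before estimating, whereas you bound $\dfrac{t}{e^t-1}\le 1$ directly.
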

\begin{proof}
Using \eqref{eq:repr}, we have 
\[
\frac{1}{m}\sinh \frac{m}{x}=\sum_{n=0}^\infty \frac{m^{2n}}{(2n+1)!\, x^{2n+1}}=
\int_0^\infty\sum_{n=0}^\infty\frac{m^{2n}t^{2n}}{(2n)!\,(2n+1)!}\, e^{-tx}\, dt,
\]
and
\[
\frac{1}{m}\sinh\frac{m}{x}-\psi'(x+1)=\int_0^\infty\left(\sum_{n=0}^\infty\frac{m^{2n}t^{2n}}{(2n)!(2n+1)!}-\frac{te^{-t}}{1-e^{-t}}\right)e^{-xt}\, dt,
\]
for all $x>0$. Hence
\begin{equation}\label{eq:final}
f(x)=\int_0^{\infty}\varphi(t)\, e^{-xt}\, dt,
\end{equation}
where $\displaystyle \varphi(t)=\sum_{n=0}^\infty\frac{m^{2n}t^{2n}}{(2n)!(2n+1)!}-\frac{te^{-t}}{1-e^{-t}}$. Further, it is
\begin{align*}
\varphi(t)(e^t-1)=(e^t-1)\sum_{n=0}^\infty\frac{m^{2n}t^{2n}}{(2n)!(2n+1)!}-t\ge t\cdot 1-t=0,
\end{align*}
for all $t\ge0$. Consequently, $\varphi\ge0$ on $[0,\infty)$ and by \eqref{eq:final} one concludes the proof.
\end{proof}

\end{document}